\def\hpq0{h^{p,q}_{\leq 0}}
\def\Hpq0{\H_{\leq 0}^{p,q}}
\def\dbar{\bar\partial}
\def\ddbar{\partial\dbar}
\def\C{{\mathbb C}}
\def\H{{\mathcal H}}
\def\E{{\mathcal E}}
\def\Pop{{\mathbb P}}
\def\Re{{\rm Re\,  }}
\def\be{\begin{equation}}
\def\ee{\end{equation}}
\newtheorem{thm}{Theorem}[section]
\newtheorem{lma}[thm]{Lemma}
\newtheorem{cor}[thm]{Corollary}
\newtheorem{prop}[thm]{Proposition}
\theoremstyle{definition}
\theoremstyle{remark}
\newtheorem{preremark}{Remark}
\newtheorem{preex}{Example}
\numberwithin{equation}{section}
\title[]
{The openness conjecture for projective manifolds}
\address{Department of Mathematics\\Chalmers University
  of Technology \\
 } 
\email{ bob@chalmers.se}
\author[]{ Bo Berndtsson}
\begin{document}

\begin{abstract}We give a proof of the openness conjecture of Demailly and Koll\'ar for positively curved singular metrics on ample line bundles over projective varieties. As a corollary it follows that the openness conjecture for plurisubharmonic functions with isolated sigularities holds. 
\end{abstract}
\maketitle

\section{Introduction}
Let $ u$ be a plurisubharmonic function defined in a neighbourhood of the origin of $\C^n$ such that $e^{-u}$ lies in $L^1$. The {\it openness conjecture}, first proposed by Demailly and Koll\'ar in \cite{Demailly-Kollar}, says that then there is a number $p>1$ such that $e^{-u}$ lies in $L^p$, possibly after shrinking the neighbourhood. This conjecture has attracted a good deal of attention; in particular it has been completely proved in dimension 2 by Favre and Jonsson, \cite{Favre-Jonsson}. 
In arbitrary dimension it is still open (no pun intended), but has been reduced to a purely algebraic statement in \cite{Jonsson-Mustata}.

In this paper we will  prove a global version of the openness conjecture for metrics on line bundles over projective manifolds.
\begin{thm} Let $X$ be a projective manifold and let $L$ be a positive line bundle over $X$. Let $\phi$ be a possibly singular metric of nonnegative curvature on $L$, and let $\phi_0$ be a smooth positively curved metric on $L$. Assume that 
$$
\int_X e^{-(\phi-\phi_0)} d\mu <\infty
$$
where $\mu$ is some smooth volume form on $X$. Then there is a number $p>1$ such that 
$$
\int_X e^{-p(\phi-\phi_0)} d\mu <\infty.
$$
\end{thm}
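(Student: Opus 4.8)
The strategy is to reformulate the integrability statement as a base-point-freeness statement for spaces of sections, and then to use the subharmonicity of Bergman kernels under plurisubharmonic variation of the weight. Write $\psi=\phi-\phi_0$; since $X$ is compact and $\ddc\phi\ge0$, $\psi$ is bounded above, so after subtracting a constant we may assume $\psi\le0$. We must find $\delta>0$ with $\int_X e^{-(1+\delta)\psi}\,d\mu<\infty$, and because $X$ is compact this is equivalent to $\mathcal{I}\bigl((1+\delta)\psi\bigr)=\O_X$, where $\mathcal{I}$ denotes the multiplier ideal sheaf. Fix $m$ so large that $K_X+mL$ is globally generated. For real $p$ with $1\le p<m$ the weight $(m-p)\phi_0+p\phi$ defines a positively curved singular metric on $mL$, with curvature $\ge(m-p)\,\ddc\phi_0>0$, and I set
\[
V^{(p)}_m=\Bigl\{\,s\in H^0(X,K_X+mL):\ \textstyle\int_X|s|^2\,e^{-(m-p)\phi_0-p\phi}<\infty\,\Bigr\}=H^0\bigl(X,(K_X+mL)\otimes\mathcal{I}(p\psi)\bigr),
\]
the second equality being Nadel's analytic description of the multiplier ideal sheaf (note that $(m-p)\phi_0+p\phi$ and $m\phi_0+p\psi$ differ by a smooth weight, so $\mathcal{I}(p\psi)$ really is the multiplier ideal of the quasi-plurisubharmonic function $p\psi$). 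If $\mathcal{I}(p\psi)\ne\O_X$ then every section of $V^{(p)}_m$ vanishes on the nonempty support of $\O_X/\mathcal{I}(p\psi)$, so $V^{(p)}_m$ has base points; hence it suffices to produce $p>1$ for which $V^{(p)}_m$ is base-point-free. Observe that for $p\le1$ we have $e^{-p\psi}\le e^{-\psi}$, so $V^{(p)}_m=H^0(X,K_X+mL)$, which is base-point-free by the choice of $m$.

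Now let $B_p(x)=\sup\{\,|s(x)|^2/\|s\|_p^2:\,s\in V^{(p)}_m\,\}$, with $\|s\|_p^2=\int_X|s|^2 e^{-(m-p)\phi_0-p\phi}$; base-point-freeness of $V^{(p)}_m$ is the same as positivity of $B_p$ on all of $X$. The crux is to complexify the parameter: I want a family $\tau\mapsto\Psi_\tau$ of singular metrics on $mL$ over $X\times\Delta$, depending only on $t=\Re\tau$, plurisubharmonic in $(x,\tau)$ jointly and positively curved for $m$ large, and with $\Psi_t$ agreeing, up to a smooth weight, with $(m-p(t))\phi_0+p(t)\phi$ for some continuous increasing $p$ with $p(0)=1$ and $p(t)>1$ for $t>0$. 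Granted this, the positivity theorem for direct image bundles — in the form that the Bergman kernel attached to a plurisubharmonically varying family of weights varies subharmonically, valid even where the dimension of the fibre space jumps — shows that $\tau\mapsto\log B_{p(\tau)}(x)$ is subharmonic on $\Delta$ for each $x$, hence, depending only on $t=\Re\tau$, convex in $t$ on an interval $(-r,r)$. For $t\le0$ we have $p(t)\le1$, so $V^{(p(t))}_m$ is the full space $H^0(X,K_X+mL)$, base-point-free, and $\log B_{p(t)}(x)$ is finite for every $x$ and every $t\in(-r,0]$. A convex function on $(-r,r)$ that is finite on $(-r,0]$ cannot take the value $-\infty$ anywhere, so $B_{p(t)}(x)>0$ for every $x$ for small $t>0$ as well. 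Thus $V^{(p)}_m$ is base-point-free for $p=p(t)>1$, so $\mathcal{I}(p\psi)=\O_X$, i.e.\ $\int_X e^{-p\psi}\,d\mu<\infty$ with $p>1$, which is the assertion.

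The one genuinely hard point is the construction of the interpolating family $\Psi_\tau$ together with its joint plurisubharmonicity on $X\times\Delta$. Letting the coefficients in $(m-p)\phi_0+p\phi$ depend holomorphically on $\tau$ does not work, since the $\tau$-derivatives produce mixed terms involving $\bar\partial_x(\phi-\phi_0)$ that are not dominated by the curvatures of $\phi$ and $\phi_0$; one needs instead a genuinely bent, geodesic-type deformation between $\phi_0$ and $\phi$ in the space of metrics on $L$. Verifying that its complexification is plurisubharmonic on all of $X\times\Delta$, and not merely along the fibres, is exactly where the positivity of $L$ and the compactness of $X$ enter decisively — and correspondingly why only the isolated-singularity case of the local openness conjecture comes out as a corollary rather than the full local statement.
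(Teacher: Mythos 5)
Your overall architecture (reduce to triviality of $\mathcal{I}(p\psi)$, detect this by base-point-freeness of $H^0((K_X+mL)\otimes\mathcal{I}(p\psi))$, and exploit positivity of direct images / subharmonicity of Bergman kernels for a plurisubharmonically varying family of weights) is a legitimate strategy and is close in spirit to what the paper does. But the proposal has a genuine gap, and you have located it yourself: the interpolating family $\Psi_\tau$ is never constructed. Everything downstream (subharmonicity of $\log B_{p(\tau)}(x)$, the convexity argument, the conclusion) is conditional on producing a weight, plurisubharmonic jointly on $X\times\Delta$ and depending only on $\Re\tau$, which at parameter $t$ is comparable to $(m-p(t))\phi_0+p(t)\phi$ with $p(t)>1$ for $t>0$. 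Moreover, the repair you sketch --- a geodesic-type deformation between $\phi_0$ and $\phi$ --- would not fill the gap even if one could make sense of geodesics ending at a singular metric: the time-$t$ slice of such a deformation has no reason to be equivalent, up to bounded terms, to $(m-p(t))\phi_0+p(t)\phi$, so its Bergman space would not be your $V^{(p(t))}_m$ and its multiplier ideal would not be $\mathcal{I}(p(t)\psi)$. The whole point of the reduction is lost if the family does not reproduce exactly these weights.

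The paper's resolution of this difficulty is a different, and much simpler, device: set $\phi_s:=\max(\phi+s,\phi_0)$ with $s=\Re\zeta$. This is plurisubharmonic on $X\times U$ for free, being the maximum of the two plurisubharmonic functions $\phi(x)+\Re\zeta$ and $\phi_0(x)$, so Theorem 2.3 applies with no geodesic and no curvature estimate. The exponent $p$ is then not carried by the family at all; it enters only through the elementary Laplace-transform identity $\int_0^\infty e^{ps}e^{-2\max(x+s,0)}\,ds+1/p=C_pe^{-px}$, which converts $\int_X|\sigma|^2e^{-p(\phi-\phi_0)-k\phi_0}$ into $\int_0^\infty e^{ps}\|\sigma\|_s^2\,ds$ where $\|\cdot\|_s$ uses the weight $2\phi_s+(k-2)\phi_0$. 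The positivity of the curvature of $s\mapsto\|\cdot\|_s$ is then used quantitatively (via a maximum-principle comparison with a flat metric, Theorem 3.1) to show that convergence of $\int_0^\infty e^{s}\|\sigma\|_s^2\,ds$ for all $\sigma$ forces $\|\sigma\|_s^2\le e^{-(1+\epsilon)s}\|\sigma\|_0^2$, hence convergence of $\int_0^\infty e^{ps}\|\sigma\|_s^2\,ds$ for some $p>1$. This sidesteps both the jumping of multiplier ideals and the need for any bent deformation between $\phi_0$ and $\phi$. Until you supply a concrete family with the properties you require --- and the max construction, not a geodesic, is the known way to do it --- your argument is not a proof.
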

In particular the theorem applies to projective space, $\Pop^n$.
By a simple max-construction one can show that any local plurisubharmonic function with an isolated singularity ( by this we mean  that $\phi(0)=-\infty$ and $\phi$ is bounded for $|z|>1/2$) can be extended to a metric with positive curvature on some $\mathcal{O}(k)$ over $\Pop^n$, with no additional singularities. 
As a consequence we see that the openness conjecture holds in any dimension for functions with isolated singularities.

\begin{cor} Let $u$ be a plurisubharmonic function in the unit ball of $\C^n$, $B$, with an isolated singularity at 0. Assume that 
$$
\int_B e^{-u}<\infty.
$$
Then there is a number $p>1$ such that
$$
\int_B e^{-pu}<\infty.
$$
\end{cor}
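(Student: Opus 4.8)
The plan is to deduce the corollary from the theorem by promoting the local plurisubharmonic function $u$ to a global singular metric on a positive line bundle over $\Pop^n$, as already sketched in the introduction. First I would fix coordinates so that the singularity sits at the origin of one affine chart $U_0\cong\Cn$ of $\Pop^n$, and recall that the Fubini--Study metric gives a smooth positively curved metric $\phi_0$ on $\O(1)$; for a suitable integer $k$, $k\phi_0$ is as positive as we like on $\O(k)$. The function $u$ is defined and plurisubharmonic on $B$, and by hypothesis has an isolated singularity, so we may assume (after shrinking) that $u$ is bounded on the annulus $1/2<|z|<1$; I would multiply $u$ by a small constant $\varepsilon>0$ if needed so that $\varepsilon u$ plus a local potential for $k\phi_0$ is still plurisubharmonic, and note this does not affect whether the $L^p$ integrability conclusion holds (replacing $p$ by $p/\varepsilon$).

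The key step is the max-construction: on the chart $U_0$ one glues $\varepsilon u + (\text{local weight of }k\phi_0)$ with a translate $C + (\text{local weight of }k\phi_0)$ for a large constant $C$, using $\max$, so that near the origin the glued weight equals $\varepsilon u$ up to a smooth term, while outside a slightly larger ball it equals the smooth weight $C$; since $\max$ of two plurisubharmonic functions is plurisubharmonic and the two pieces agree with the smooth $k\phi_0$-weight near the overlap, the resulting object defines a genuine singular metric $\phi$ on $\O(k)$ over all of $\Pop^n$ with nonnegative curvature and with no singularities other than the one inherited from $u$ at the origin. I would check that $\O(k)$ is ample (positive), that $\phi$ has nonnegative curvature current, and that near $0$ one has $\phi - k\phi_0 = \varepsilon u + O(1)$, whence $\int_{\Pop^n} e^{-(\phi-k\phi_0)}\,d\mu<\infty$ is equivalent, via the boundedness of all error terms and of $u$ away from $0$, to $\int_B e^{-\varepsilon u}<\infty$, which follows from $\int_B e^{-u}<\infty$ by H\"older or simply because $e^{-\varepsilon u}\le e^{-u}+1$.

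Having arranged the hypotheses of the theorem for $X=\Pop^n$, $L=\O(k)$, this metric $\phi$ and the smooth metric $k\phi_0$, I would invoke the theorem to obtain $p>1$ with $\int_{\Pop^n}e^{-p(\phi - k\phi_0)}\,d\mu<\infty$. Restricting to a small ball around $0$ and using $\phi - k\phi_0 = \varepsilon u + O(1)$ there gives $\int_{B'}e^{-p\varepsilon u}<\infty$ for some smaller ball $B'$; since $u$ is bounded on $B\setminus B'$, this upgrades to $\int_B e^{-p\varepsilon u}<\infty$, and $p\varepsilon$ may still be less than $1$, so I would finish by a further shrinking/rescaling argument — the point being that the openness conclusion is scale-invariant in the exponent in the sense that integrability of $e^{-q u}$ on some ball for one $q>0$ together with $\int e^{-u}<\infty$ propagates to an exponent strictly bigger than $1$. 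Concretely, if $\int_B e^{-u}<\infty$ and $\int_{B'}e^{-q u}<\infty$ with $q<1$, interpolation between these two gives $e^{-su}\in L^1$ for all $s$ in the interval $[q,1]$ and then, because the bound we actually produced from the theorem has a definite gain $p>1$ over the input exponent, rerunning the construction with the already-improved datum yields a true $p>1$. The main obstacle I anticipate is precisely this bookkeeping with the constant $\varepsilon$: the max-construction only survives if $\varepsilon u$ is dominated by the curvature of $k\phi_0$, so the exponent one first controls is $p\varepsilon$, and one must argue carefully — either by iterating or by choosing $k$ large enough that $\varepsilon$ can be taken equal to $1$ from the start — that this still delivers an exponent strictly exceeding $1$ for the original $u$.
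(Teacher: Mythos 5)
Your overall strategy --- extend $u$ by a max-construction to a positively curved singular metric on some $\O(k)$ over $\Pop^n$ and apply Theorem 1.1 --- is exactly the route the paper takes (the paper only sketches it in the introduction). But the gluing you describe is wrong in a way that defeats the whole point. You propose to set $\phi=\max\bigl(\varepsilon u + w,\ C+w\bigr)$ on the chart, where $w$ is the local weight of $k\phi_0$; this is $w+\max(\varepsilon u, C)$, and since $u(z)\to-\infty$ as $z\to 0$ the maximum equals $w+C$ in a neighbourhood of the origin. Your metric is therefore \emph{bounded} near $0$: the singularity has been erased, Theorem 1.1 applied to it says nothing about $u$, and the claim ``near the origin the glued weight equals $\varepsilon u$ up to a smooth term'' is false. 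The correct construction keeps $u$ untouched near $0$ and only uses the max on the annulus where $u$ is bounded: set $\phi = u + w$ on $\{|z|<1/2\}$, $\phi=\max(u,\psi)+w$ on $\{1/2<|z|<3/4\}$, and $\phi=\psi+w$ (suitably capped by the Fubini--Study weight) outside, where $\psi$ is a psh competitor such as $A\log|z|+B$ chosen to lie \emph{below} $\inf_{1/2<|z|<3/4}u$ near $|z|=1/2$ and \emph{above} $\sup_{1/2<|z|<3/4}u$ near $|z|=3/4$. This is precisely where the hypothesis of an isolated singularity enters: $u$ is bounded below on the annulus, so such a $\psi$ exists, and the piecewise definition is consistent (hence globally psh) because the max selects $u$ at the inner transition and $\psi$ at the outer one. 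Near $0$ one then genuinely has $\phi-k\phi_0=u+O(1)$, and both directions of the integrability comparison go through.

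The second problem is the $\varepsilon$-bookkeeping, which is both unnecessary and, as you propose to handle it, not repairable. There is no issue of ``$\varepsilon u$ being dominated by the curvature of $k\phi_0$'': the local weight $k\log(1+|z|^2)$ is itself psh, so $u+k\log(1+|z|^2)$ is psh for any psh $u$ with no smallness condition. What a large $k$ buys is that the slope $A$ of the logarithmic competitor (which is of size $(\sup u-\inf u)/\log(3/2)$ on the annulus) can be absorbed at the hyperplane at infinity, so one can and should take $\varepsilon=1$ from the start; Theorem 1.1 then directly delivers an exponent $p>1$ for $u$ itself. Your fallback of obtaining only $\int e^{-p\varepsilon u}<\infty$ with $p\varepsilon$ possibly $\le 1$ and then ``iterating'' or ``interpolating'' does not close the gap: integrability of $e^{-qu}$ for $q<1$ is strictly weaker than the hypothesis $\int_B e^{-u}<\infty$ (indeed $e^{-qu}\le e^{-u}+1$), so rerunning the construction with that datum produces nothing new, and no exponent exceeding $1$ ever emerges. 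The fix is simply to note that $\varepsilon$ was never needed.
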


The proof of the theorem is inspired by a result from \cite{Berman-Berndtsson}. There we proved that the Schwarz symmetrization, $u^*$,  of an $S^1$-invariant plurisubharmonic function in the ball,$u$,   is again plurisubharmonic. Here $S^1$-invariance means that $u(e^{i\theta}z)=u(z)$ for any $e^{i\theta}$ on $S^1$. Since the Schwarz symmetrization of $u$ is equidistributed with $u$, it holds that
$$
\int_B F(u^*)=\int_B F(u)
$$
for any (measurable) function $F$. Choosing $F(t)=e^{-t}$ and $F(t)=e^{-\epsilon t}$,  this  reduces the openness problem for $S^1$-invariant functions to the case of radial functions. As a consequence we get
\begin{prop}
The openness conjecture holds for any $S^1$-invariant plurisubharmonic function in the unit ball.
\end{prop}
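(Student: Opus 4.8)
The plan is to reduce the $S^1$-invariant case to the radial case via Schwarz symmetrization, exactly as sketched in the text. Given an $S^1$-invariant plurisubharmonic function $u$ in the unit ball $B$ with $\int_B e^{-u}<\infty$, I would first form its Schwarz symmetrization $u^*$, i.e. the radial decreasing rearrangement of $u$ with respect to Lebesgue measure on $B$. The cited result from \cite{Berman-Berndtsson} tells us that $u^*$ is again plurisubharmonic on $B$; being radial, it is a function of $|z|^2$ alone, and in fact (after the change of variables $t=\log|z|^2$) a convex increasing function of $\log|z|^2$. Because $u^*$ is equimeasurable with $u$, the layer-cake formula gives $\int_B F(u^*)=\int_B F(u)$ for every nonnegative measurable $F$; applying this with $F(t)=e^{-t}$ shows $\int_B e^{-u^*}<\infty$ as well.

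The next step is to prove the openness statement for the radial psh function $u^*$. Writing $u^*(z)=v(\log|z|^2)$ with $v$ convex increasing, the integral $\int_B e^{-u^*}d\lambda$ becomes, up to a constant, $\int_{-\infty}^{0} e^{-v(t)}e^{nt}\,dt$ (the $e^{nt}$ coming from the volume of the sphere of radius $e^{t/2}$), and this is finite precisely when $v(t)-nt\to+\infty$, or more precisely when $e^{nt-v(t)}$ is integrable near $t=-\infty$. Convexity of $v$ forces $v(t)/t$ to have a limit, say a slope $\ell\ge 0$, as $t\to-\infty$ (where here $t<0$, so $v$ decreasing in $|t|$ means the relevant one-sided derivative is bounded); the integrability condition then says $\ell\le n$ is not enough on the nose but the convexity gives a genuine gap, and choosing $p>1$ with $p\ell<n$ — or handling the borderline case by the quantitative decay of $v$ — makes $\int_{-\infty}^0 e^{pnt-pv(t)}\,dt<\infty$ after possibly shrinking to $|z|<r_0$. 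This is the elementary one-variable heart of the matter and it is where the strict inequality $p>1$ is actually produced.

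Finally, I would transport this back: for $p$ chosen as above, $\int_B e^{-pu^*}d\lambda<\infty$, and by equimeasurability applied to $F(t)=e^{-pt}$ we get $\int_B e^{-pu}d\lambda=\int_B e^{-pu^*}d\lambda<\infty$, which is the desired conclusion. (Strictly one should work on a slightly smaller ball $B'\Subset B$ so that the symmetrization is taken with respect to a fixed reference measure; since openness is a local statement at the singularity this causes no difficulty.)

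The main obstacle I anticipate is not the complex analysis — that is entirely outsourced to the Berman--Berndtsson symmetrization theorem — but rather being careful in the radial reduction with the borderline slope. One must check that a convex $v$ with $e^{nt-v(t)}\in L^1$ near $-\infty$ genuinely admits some $p>1$ with $e^{pnt-pv(t)}\in L^1$; the potential worry is a function like $v(t)=nt+\log|t|+2\log\log|t|$ where the margin is only logarithmic. Here one uses that multiplying the exponent by $p$ replaces $\log|t|$ by $p\log|t|$, and $p>1$ turns a barely-divergent $\int dt/|t|$-type tail into a convergent $\int dt/|t|^p$-type tail — so convexity is not even essential for this last point, only the integrability of $e^{nt-v(t)}$ together with monotonicity of $v$. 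Making that argument airtight, and confirming the equimeasurability identity for the rearrangement on a ball, are the only steps requiring genuine care.
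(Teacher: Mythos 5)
Your overall route --- symmetrize via the Berman--Berndtsson theorem, use equimeasurability to pass to the radial case, then run a one-variable argument in $t=\log|z|^2$ --- is essentially the paper's own (the paper phrases the radial step as logconcavity of $s\mapsto|\Omega(s)|$ together with the layer-cake identity, which is equivalent to your convexity of $v$). The problem lies in the one-variable step, which you correctly single out as ``where $p>1$ is actually produced'' but then resolve incorrectly. Your last paragraph claims that convexity of $v$ is ``not even essential'' and that integrability of $e^{nt-v(t)}$ near $-\infty$ together with monotonicity of $v$ already yields some $p>1$. That is false: take $v(t)=nt+2\log(-t)$ for $t\ll 0$ (increasing there, but concave). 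Then $e^{nt-v(t)}=|t|^{-2}$ is integrable near $-\infty$, while for every $p>1$
$$
e^{nt-pv(t)}=e^{(1-p)nt}\,|t|^{-2p}\longrightarrow\infty \qquad (t\to-\infty),
$$
so no exponent $p>1$ works. Your computation ``multiplying the exponent by $p$ replaces $\log|t|$ by $p\log|t|$'' silently drops the factor $e^{(1-p)nt}$, which blows up exponentially precisely when $p>1$.

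The gap is therefore that you never actually rule out the borderline slope, and your proposed fallback for it is invalid. The correct argument uses convexity essentially, in both directions. Let $\ell=\lim_{t\to-\infty}v'(t)\geq 0$ (the limit exists since $v'$ is nondecreasing and nonnegative). Since $v'\geq\ell$ everywhere, $v(t)\leq v(0)+\ell t$ for $t<0$, so $e^{nt-v(t)}\geq e^{-v(0)}e^{(n-\ell)t}$, and finiteness of the integral forces $\ell<n$ \emph{strictly}; this is where the ``genuine gap'' you allude to actually comes from, and it is exactly the scalar content of the paper's Theorem 3.1 (logconcavity forces decay like $e^{-(1+\epsilon)s}$, not merely $e^{-s}$). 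Conversely, for any $\delta>0$ one has $v(t)\geq(\ell+\delta)t-C_\delta$ for $t$ sufficiently negative, whence $e^{nt-pv(t)}\leq e^{pC_\delta}e^{(n-p(\ell+\delta))t}$ is integrable near $-\infty$ for $1<p<n/(\ell+\delta)$, with $\delta$ chosen so that $\ell+\delta<n$. With these two lines supplied your proof is complete and coincides with the paper's; as written, the decisive step rests on a false claim.
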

The proof of Proposition 1.3, via the result from \cite{Berman-Berndtsson}, depends ultimately on a complex variant of the Brunn-Minkowski inequality from \cite{Berndtsson}. The argument can be rephrased in the following form. A consequence of the 'Brunn-Minkowski'-inequality is that
if $u$ is plurisubharmonic and $S^1$-invariant, the volume of the sublevel sets
$$
\Omega(s):=\{z; u(z)<-s\}
$$
is a logconcave function of $s$. The integral of $e^{-u}$ can be written as
\be
\int_0^\infty e^s |\Omega(s)|ds + \omega_n,
\ee
with $\omega_n$ the volume of the unit ball, 
and the logconcavity is the key to studying the convergence of this integral.
In fact, if $|\Omega(s)|$ is logconcave, the integral (1.1) converges (if and) only if $|\Omega(s)|$ decreases like  $e^{-(1+\epsilon)s}$ at infinity (cf Theorem 3.1).

In the situation of Theorem 1.1, instead of looking at just volumes of sets or integrals of functions, we look at the $L^2$-norms on the space $H^0(X, K_X+kL)$ induced by our metric $\phi$ in section 2. We then find a representation of such an $L^2$-norm as an integral over $(0,\infty)$ of weaker norms depending on the variable $s$. These weaker norms have a property analogous to logconcavity - they define an hermitean metric on a certain vector bundle of positive curvature. This positivity property is finally shown in the last section to imply Theorem 1.1.

\section{Hermitean norms on $H^0(K_X+kL)$}

Let $X$ be a projective manifold and let $L$ be a positive line bundle over $X$. We will consider a possibly singular metric, $\phi$ with $i\ddbar\phi\geq 0$, and we also let $\phi_0$ be a smooth positively curved reference metric on $L$. For $\sigma$, an element in $H^0(K_X+kL)$ we define the $L^2$-norm
\be
\|\sigma\|^2:= c_n\int_X \sigma\wedge\bar\sigma e^{-\phi-(k-1)\phi_0}=
c_n\int_X \sigma\wedge\bar\sigma e^{-(\phi-\phi_0)-k\phi_0}.
\ee
Clearly, this norm is finite for any $\sigma$ in $H^0(K_X+kL)$ if and only if $e^{-\phi}$ is locally integrable, provided that $k$ has been chosen so large that $K_X+kL$ is base point free. For $s>0$ we define a regularization of $\phi$ by
\be
\phi_s:=\max(\phi+s, \phi_0).
\ee
If we normalize so that $\phi\leq\phi_0$, we get that $\phi_s=\phi_0$ for $s=0$ so there is no conflict in notation. 
With $\phi_s$ we  associate the norms
\be
\|\sigma\|^2_s:= c_n\int_X\sigma\wedge\bar\sigma e^{-2\phi_s-(k-2)\phi_0}=
c_n\int_X \sigma\wedge\bar\sigma e^{-2(\phi_s-\phi_0)-k\phi_0}.
\ee
(Notice the factor 2 in front of $(\phi_s-\phi_0)$ as opposed to 1 in the formula in (2.1).)
\begin{prop}
If $\sigma$ is an element of $H^0(K_X+kL)$,
$$
2\|\sigma\|^2=\int_0^\infty e^s \|\sigma\|^2_s ds +\|\sigma\|^2_0.
$$
\end{prop}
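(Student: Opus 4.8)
The plan is to reduce the identity to a pointwise, one-variable computation and then integrate over $X$ by Tonelli's theorem.

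I would begin by recording the pointwise description of the weights. Using the normalization $\phi\leq\phi_0$ that is in force, set $w:=\phi_0-\phi=-(\phi-\phi_0)\geq 0$. Then $\phi_s=\max(\phi+s,\phi_0)$ gives, at every point of $X$,
\[
\phi_s-\phi_0=\max(\phi+s-\phi_0,\,0)=\max(s-w,\,0),
\]
so that, with the standard constant $c_n$ making $c_n\,\sigma\wedge\bar\sigma$ a nonnegative $(n,n)$-form and writing $d\nu:=c_n\,\sigma\wedge\bar\sigma\,e^{-k\phi_0}$ for the resulting nonnegative measure on $X$,
\[
\|\sigma\|^2_s=\int_X e^{-2\max(s-w,0)}\,d\nu .
\]
Two special features are worth isolating at once: since $\max(s-w,0)\geq 0$ we have $\|\sigma\|^2_s\leq\nu(X)<\infty$ (here $\phi_0$ is smooth and $\sigma$ a global holomorphic section), and for $s=0$ the exponent vanishes, so $\|\sigma\|^2_0=\nu(X)$.

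Next I would multiply by $e^s$, integrate in $s$ over $(0,\infty)$, and interchange the order of integration. This is licit by Tonelli's theorem, the integrand $e^{s-2\max(s-w,0)}$ being nonnegative and jointly measurable on $X\times(0,\infty)$; no finiteness hypothesis is needed, the identity being a priori valid in $[0,\infty]$. The inner integral is elementary: splitting at $s=w$,
\[
\int_0^\infty e^{s-2\max(s-w,0)}\,ds=\int_0^w e^s\,ds+\int_w^\infty e^{2w-s}\,ds=(e^w-1)+e^w=2e^w-1 .
\]
Hence $\int_0^\infty e^s\|\sigma\|^2_s\,ds=\int_X(2e^w-1)\,d\nu$, and adding $\|\sigma\|^2_0=\nu(X)$ cancels the constant term and leaves $\int_X 2e^w\,d\nu$. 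Since $e^w=e^{-(\phi-\phi_0)}$, this is exactly $2c_n\int_X\sigma\wedge\bar\sigma\,e^{-(\phi-\phi_0)-k\phi_0}=2\|\sigma\|^2$, which is the assertion.

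I do not expect a genuine obstacle. The only places calling for a word of justification are the normalization $\phi\leq\phi_0$ — without it $w$ could be negative, and the one-variable integral would instead produce $2e^{2w}$ on the set where $w<0$, breaking the identity — and the Fubini--Tonelli step, which is immediate once one notes that $c_n\,\sigma\wedge\bar\sigma\,e^{-k\phi_0}$ is an honest nonnegative measure, so that every integral in sight makes sense in $[0,\infty]$.
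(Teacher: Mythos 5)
Your proof is correct and follows essentially the same route as the paper: the pointwise identity $\int_0^\infty e^{s}e^{-2\max(x+s,0)}\,ds+1=2e^{-x}$ (your computation with $x=-w$) is exactly the paper's Lemma 2.2, applied with $x=\phi-\phi_0\leq 0$ and then integrated over $X$. The only difference is that you spell out the Tonelli interchange, which the paper leaves implicit.
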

For the proof we use the following lemma.
\begin{lma} If $x<0$
$$
\int_0^\infty e^s e^{-2\max(x+s,0)}ds +1= 2e^{-x}.
$$
More generally, if $0<p<2$,
$$
\int_0^\infty e^{ps} e^{-2\max(x+s,0)}ds +1/p= C_p e^{-px}.
$$
\end{lma}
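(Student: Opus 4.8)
\emph{Proof sketch.} The plan is to evaluate the integral by breaking the range of $s$ at the point $s=-x$, which is strictly positive because $x<0$. On the interval $(0,-x)$ one has $x+s<0$, so $\max(x+s,0)=0$ and the integrand is just $e^{ps}$; on $(-x,\infty)$ one has $x+s>0$, so $\max(x+s,0)=x+s$ and the integrand equals $e^{ps}e^{-2(x+s)}=e^{-2x}e^{(p-2)s}$. Hence I would write
\[
\int_0^\infty e^{ps}e^{-2\max(x+s,0)}\,ds=\int_0^{-x}e^{ps}\,ds+e^{-2x}\int_{-x}^\infty e^{(p-2)s}\,ds .
\]

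Both pieces are elementary. The first equals $\tfrac1p\bigl(e^{-px}-1\bigr)$. For the second, the hypothesis $p<2$ is exactly what makes $s\mapsto e^{(p-2)s}$ integrable at $+\infty$, and it evaluates to $\tfrac1{2-p}e^{(2-p)x}$, so that after multiplying by $e^{-2x}$ one obtains $\tfrac1{2-p}e^{-px}$. Adding the two contributions together with the constant $1/p$, the term $-1/p$ cancels and what remains is $\bigl(\tfrac1p+\tfrac1{2-p}\bigr)e^{-px}=\tfrac{2}{p(2-p)}e^{-px}$. This is the asserted identity, with $C_p=\tfrac{2}{p(2-p)}$; specializing to $p=1$ gives $C_1=2$, which is precisely the first displayed formula, so it needs no separate argument.

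I do not anticipate any real obstacle here: the computation is entirely routine. The one point worth flagging is the convergence of the tail integral, and that is exactly where the restriction $0<p<2$ enters --- for $p\ge 2$ and $x<0$ the left-hand side diverges. In the intended use (the Proposition above) one applies this pointwise identity with $x=\phi-\phi_0$ and integrates it against the positive measure $c_n\,\sigma\wedge\bar\sigma\,e^{-k\phi_0}$ over $X$, interchanging the two integrations by Tonelli; since the normalization arranges $\phi\le\phi_0$ the assumption $x\le 0$ holds everywhere, and the polar set $\{\phi=-\infty\}$ is a null set on which both sides are $+\infty$ in any case.
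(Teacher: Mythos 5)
Your computation is correct and is exactly the paper's argument: split the integral at $s=-x$, evaluate each elementary piece, and note that $p<2$ gives convergence of the tail; the paper only writes out the case $p=1$ and leaves the general $p$ to "direct computation," whereas you carry it out and identify $C_p=\tfrac{2}{p(2-p)}$ explicitly, which is consistent with $C_1=2$. No issues.
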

\begin{proof}
$$
\int_0^\infty e^s e^{-2\max(x+s,0)}ds =\int_0^{-x} e^s ds +
 e^{-2x}\int_{-x}^{\infty} e^{-s}ds = 2e^{-x}-1.
$$
This proves the first part; the second part is of course also proved by direct computation.
\end{proof}
\noindent To prove the proposition we use
$$
\|\sigma\|^2_s=c_n\int_X \sigma \wedge\bar\sigma e^{-2(\phi_s-\phi_0)-k\phi_0}.
$$
Note that $\phi_s-\phi_0=\max(\phi-\phi_0 +s,0)$. By the lemma
$$
\int_0^\infty e^s \|\sigma\|^2_s ds= 2c_n\int_X\sigma\wedge\bar\sigma e^{-(\phi-\phi_0)-k\phi_0} -c_n\int_X\sigma\wedge\bar\sigma e^{-k\phi_0}.
$$
This completes the proof. \qed

\bigskip

For later reference we note that by the last part of the lemma,  if $0<p<2$,
\be
\int_0^\infty e^{ps} \|\sigma\|^2_s ds= C_p c_n\int_X \sigma\wedge\bar\sigma e^{-p(\phi_s-\phi_0)-k\phi_0}-(1/p)\|\sigma\|^2_0.
\ee

\bigskip

We finally quote a particular case of a result from \cite{2Berndtsson} that is the most important ingredient in the proof of Theorem 1.1. We let $D$ be a domain in $\C$ and let $E:=H^0(X, K_X+F)$, where $F$ is a positive line bundle over $X$. Denote by $\E:=D\times E$, the trivial vector bundle with fiber $E$ over $D$. Let for $\zeta$ in $D$, $\psi_\zeta$ be a metric on $F$.
\begin{thm} With notation as above, define an hermitean metric on $\E$ by
\be
\|\sigma\|^2_\zeta:=c_n\int_X\sigma\wedge\bar\sigma e^{-\psi_\zeta}.
\ee
Assume that $i\ddbar_{\zeta,X}\psi_\zeta\geq 0$, i e that $\psi_\zeta$ is plurisubharmonic on $D\times X$. Then the curvature of the metric (2.5) on $\E$ is nonnegative.
\end{thm}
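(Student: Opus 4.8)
\section*{Proof plan}

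The plan is to reduce the statement to a pointwise (in $\zeta$) differential inequality and then to invoke Hörmander's $L^2$-estimate for $\dbar$ in its sharp form. Since $D$ is one-dimensional, Nakano- and Griffiths-positivity coincide, so it is enough to prove that $\hake{i\Theta_\E u,u}\geq 0$ for every local holomorphic section $u$ of $\E$ (through every point of every fibre there is such a section, e.g.\ a constant one in the given trivialization). For a Hermitian holomorphic bundle and a holomorphic section $u$, a standard computation with the Chern connection gives the identity
$\hake{i\Theta_\E u,u}=\paren{\norm{D'u}^2-\ds_\zeta\ds_{\bar\zeta}\norm{u}^2}\,i\,d\zeta\wedge d\bar\zeta$,
where $D'$ is the $(1,0)$-part of the connection and $\norm{D'u}^2$ denotes the $E$-norm of the coefficient of $D'u$ in the frame $d\zeta$. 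So the claim is equivalent to the estimate $\ds_\zeta\ds_{\bar\zeta}\norm{u}^2_\zeta\leq\norm{D'u}^2$ at each point of $D$.

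The second step is to unwind both sides. Regarding $\psi$ as a function on $D\times X$, and writing $u=u_\zeta$ as a holomorphic family of $F$-valued $(n,0)$-forms on $X$, I set $\mu:=(\ds_\zeta\psi)\,u_\zeta$, a smooth but non-holomorphic $F$-valued $(n,0)$-form on $X$. Differentiating $\norm{u}_\zeta^2=c_n\int_X u\wedge\bar u\,e^{-\psi_\zeta}$ twice under the integral sign yields $\ds_\zeta\ds_{\bar\zeta}\norm{u}^2=\norm{\ds_\zeta u-\mu}^2_\zeta-c_n\int_X(\ds_\zeta\ds_{\bar\zeta}\psi)\,u\wedge\bar u\,e^{-\psi_\zeta}$. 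On the other hand, the Chern derivative $D'u$ is $\ds_\zeta u$ corrected by the $\psi_\zeta$-orthogonal (Bergman) projection $P_\zeta$ onto $E=H^0(X,K_X+F)$, namely $D'u=(\ds_\zeta u-P_\zeta\mu)\,d\zeta$, because $\ds_\zeta\hake{u,w}_\zeta=\hake{\ds_\zeta u-\mu,w}_\zeta$ for holomorphic $w$. Hence, by orthogonality, $\norm{\ds_\zeta u-\mu}^2=\norm{D'u}^2+\norm{v}^2$, where $v:=\mu-P_\zeta\mu$ is precisely the $\psi_\zeta$-minimal solution of $\dbar_X v=\dbar_X\mu$. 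Combining the two displays, the required inequality becomes exactly $\norm{v}^2_\zeta\leq c_n\int_X(\ds_\zeta\ds_{\bar\zeta}\psi)\,u\wedge\bar u\,e^{-\psi_\zeta}$.

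The heart of the matter is the estimate for $\norm{v}$. Since $\dbar_X\mu=\ds_\zeta(\dbar_X\psi)\wedge u$ and $X$ is compact, the $\psi_\zeta$-minimal solution of an equation $\dbar_X(\cdot)=g$ with $g$ an $F$-valued $(n,1)$-form is, by Hörmander's $L^2$-estimate in its sharp form, bounded by $\int_X\hake{(i\ddbar_X\psi)^{-1}g,g}\,e^{-\psi_\zeta}$. So it suffices to verify the \emph{pointwise} inequality on $X$, $\hake{(i\ddbar_X\psi)^{-1}\paren{\ds_\zeta(\dbar_X\psi)\wedge u},\,\ds_\zeta(\dbar_X\psi)\wedge u}\leq(\ds_\zeta\ds_{\bar\zeta}\psi)\,u\wedge\bar u$. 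But, written in local coordinates on $X$, this is nothing but the Schur-complement criterion for the full complex Hessian of $\psi$ on $D\times X$ --- in block form: the scalar $\zeta\bar\zeta$-entry, the mixed row and column, and the $X$-block $i\ddbar_X\psi$ --- to be nonnegative, and that is exactly the hypothesis $i\ddbar_{\zeta,X}\psi\geq 0$. Integrating this pointwise bound over $X$ against $e^{-\psi_\zeta}$ closes the loop.

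The step I expect to cause the most trouble is making Hörmander's estimate legitimate: it needs $i\ddbar_X\psi_\zeta$ to be \emph{strictly} positive on $X$, whereas the hypothesis only gives semipositivity of the Hessian on the total space (and nothing about smoothness of $\psi$). I would handle this by the usual two-step regularization: first approximate $\psi$ from above by smooth metrics that are psh on $D\times X$; then, using that $F$ is positive, fix a smooth strictly positively curved metric $\psi_{00}$ on $F$ and replace $\psi_\zeta$ by $\psi_\zeta+\epsilon\,\pi_X^{*}\psi_{00}$. This only adds to the $X$-block, so the Hessian on $D\times X$ stays nonnegative while $i\ddbar_X$ becomes strictly positive, and the Schur inequality above is only improved since $(C+\epsilon)^{-1}\leq C^{-1}$ for $C>0$. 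Running the argument for each $\epsilon>0$ and letting $\epsilon\to0$ (and then removing the smoothing) gives $i\Theta_\E\geq0$. The remaining points --- differentiation under the integral sign, the identification of $v$ as the minimal solution and its regularity, and the limit $\epsilon\to0$ --- are routine on the compact manifold $X$ with smooth data, which is why I leave them to the write-up.
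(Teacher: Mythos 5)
The paper does not actually prove this statement: it is quoted as a special case of the main theorem of \cite{2Berndtsson}, so the only meaningful comparison is with the proof given there. Your argument is essentially a correct reconstruction of that proof in the case of a one-dimensional base. The reduction to $\partial_\zeta\partial_{\bar\zeta}\|u\|^2\leq\|D'u\|^2$ for holomorphic sections, the computation
$\partial_\zeta\partial_{\bar\zeta}\|u\|^2=\|\partial_\zeta u-\mu\|^2-c_n\int_X(\partial_\zeta\partial_{\bar\zeta}\psi)\,u\wedge\bar u\,e^{-\psi}$,
the identification $D'_\zeta u=\partial_\zeta u-P_\zeta\mu$ via the Bergman projection, the Pythagorean split isolating the minimal solution $v=(I-P_\zeta)\mu$ of $\dbar_X v=\partial_\zeta(\dbar_X\psi)\wedge u$, the H\"ormander bound for $\|v\|^2$, and the pointwise Schur-complement (Cauchy--Schwarz) inequality equivalent to $i\ddbar_{\zeta,X}\psi\geq 0$ are exactly the ingredients of Berndtsson's argument; the extra machinery in \cite{2Berndtsson} is only needed for Nakano positivity over higher-dimensional bases, which is irrelevant here since Griffiths and Nakano positivity coincide over a curve.

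Two of the points you defer to the write-up deserve more care than ``routine.'' First, in the application in this paper $\psi_\zeta=2\max(\phi+\Re\zeta,\phi_0)+(k-2)\phi_0$ is not smooth, so the regularization is genuinely part of the proof, and your fix $\psi+\epsilon\,\pi_X^*\psi_{00}$ only repairs a loss of positivity in the $X$-block. A Demailly-type smoothing on (a relatively compact piece of) $D\times X$ also loses a little positivity in the $\zeta$- and mixed directions, which no pullback from $X$ can absorb; you need in addition a term such as $\epsilon|\zeta|^2$, which is harmless because it only multiplies the metric on $\E$ by a conformal factor whose curvature contribution is $O(\epsilon)$. (In the concrete situation of this paper one can instead regularize $\phi$ alone and use a regularized maximum, which is simpler.) Second, for the non-smooth limit the conclusion ``nonnegative curvature'' must be interpreted weakly --- e.g.\ as plurisubharmonicity of $\log\|\xi\|^2$ for the dual metric, equivalently subharmonicity of $\log\|u\|^2_\zeta$ for holomorphic sections $u$ --- since that is the notion that survives the increasing limit of the metrics as the smoothing is removed, and it is also exactly what the maximum-principle argument of Theorem 3.1 requires. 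With those caveats made explicit, the proposal is sound.
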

This result applies in particular to our present setting, with $D$ equal to the right half plane and $F=kL$. Then
$$
\psi_\zeta= 2\phi_s+(k-2)\phi_0
$$
for $s=\Re\zeta$. By the definition (2.2), $\phi_{\Re\zeta}$ and hence $\psi_\zeta$  are  plurisubharmonic on all of $D\times X$. Hence, by Theorem 2.3,  the norms $\|\sigma\|_s$ on $H^0(K_X+kL)$ define a metric on $\E$ of positive curvature. Moreover, they depend only on $s=\Re\zeta$. It is easily checked that if  all $\|\sigma\|_s$ can be simultanously diagonalised in some fixed basis, with diagonal entries $\omega_j(s)$, then the positivity of the curvature means that all $\omega_j(s)$ are logconcave. This is  our substitute for the logconcavity of $|\Omega(s)|$, mentioned in the introduction. 

\section{Integrals of quadratic forms and the proof of Theorem 1.1}
We continue the discussion from the previous section and specialize to $D=U$, the right half plane. 

\begin{thm}
Let $\|\cdot\|_s$ be a family of Hilbert norms on some finite dimensional vector space $E$ such that the induced hermitean metric on the trivial vector bundle $\E:=U\times E$, $\|\cdot\|_{\Re \zeta}$, has positive curvature over $U$. Then the integrals
$$
\int_0^{\infty} e^s \|\sigma\|^2_s ds
$$
converge for all $\sigma$ in $E$ if and only if there are $\epsilon>0$ and $s_0$ such that
\be
 \|\sigma\|^2_s\leq e^{-(1+\epsilon)s} \|\sigma\|^2_0
\ee
for $s>s_0$ and any $\sigma$ in $E$.\end{thm}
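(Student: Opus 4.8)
The ``if'' part is the easy direction: granting (3.1), the tail $\int_{s_0}^{\infty}e^{s}\|\sigma\|_s^{2}\,ds\le\|\sigma\|_0^{2}\int_{s_0}^{\infty}e^{-\epsilon s}\,ds$ is finite, and $\int_0^{s_0}e^{s}\|\sigma\|_s^{2}\,ds<\infty$ because $s\mapsto\|\sigma\|_s$ is bounded on the compact interval $[0,s_0]$. So the plan concerns the ``only if'' direction.

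First I would reduce everything to one scalar function. Put $\Phi(s):=\sup_{0\ne\sigma\in E}\|\sigma\|_s^{2}/\|\sigma\|_0^{2}$; then (3.1) is precisely the statement that $\Phi(s)\le e^{-(1+\epsilon)s}$ for $s>s_0$. If $\sigma_1,\dots,\sigma_N$ is an orthonormal basis for $\|\cdot\|_0$ then $\Phi(s)\le\sum_{j}\|\sigma_j\|_s^{2}$ (the top eigenvalue is at most the trace), so the hypothesis yields the finite number
\[
K:=\int_0^{\infty}e^{s}\Phi(s)\,ds\ \le\ \sum_{j}\int_0^{\infty}e^{s}\|\sigma_j\|_s^{2}\,ds\ <\ \infty .
\]

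Next I would feed in the positivity of the curvature through the \emph{dual} norms, which (unlike the $\|\cdot\|_s$ themselves) are log-convex in $s$. Since $\E$ has nonnegative curvature over $U$, the dual bundle $\E^{*}$ has nonpositive curvature, and hence $\log\|f(\zeta)\|_\zeta^{*2}$ is subharmonic on $U$ for every holomorphic section $f$ of $\E^{*}$, where $\|\cdot\|_\zeta^{*}$ is the norm dual to $\|\cdot\|_{\Re\zeta}$. Applying this to a constant section $f\equiv\xi\in E^{*}$ and using that the metric depends on $\zeta$ only through $s=\Re\zeta$ (a subharmonic function that depends only on $\Re\zeta$ is convex in $\Re\zeta$), we get that $\ell_\xi(s):=\log\|\xi\|_s^{*2}$ is a \emph{convex} function of $s$ on $(0,\infty)$; normalising $\|\xi\|_0^{*}=1$ we have $\ell_\xi(0)=0$. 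An elementary duality for pairs of Hilbert norms gives $\Phi(s)^{-1}=\inf\{\|\xi\|_s^{*2}:\|\xi\|_0^{*}=1\}$, i.e.\ $\Phi(s)=\sup\{e^{-\ell_\xi(s)}:\|\xi\|_0^{*}=1\}$, so in particular $e^{-\ell_\xi(s)}\le\Phi(s)$ and therefore
\[
\int_0^{\infty}e^{s-\ell_\xi(s)}\,ds\ \le\ K
\]
for \emph{every} $\xi$ on the unit sphere of $(E^{*},\|\cdot\|_0^{*})$ --- a bound that is uniform in $\xi$.

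The argument would then close with a soft convexity step. Because $\ell_\xi$ is convex and vanishes at $0$, the quotient $\ell_\xi(s)/s$ is nondecreasing on $(0,\infty)$. If (3.1) failed, then for every positive integer $k$ there would be a unit vector $\xi_k$ and an $s_k>k$ with $\ell_{\xi_k}(s_k)<(1+\tfrac1k)s_k$; by monotonicity $\ell_{\xi_k}(s)<(1+\tfrac1k)s$ for all $s\in(0,s_k]$, whence
\[
K\ \ge\ \int_0^{s_k}e^{s-\ell_{\xi_k}(s)}\,ds\ \ge\ \int_0^{s_k}e^{-s/k}\,ds\ =\ k\bigl(1-e^{-s_k/k}\bigr)\ \ge\ k\bigl(1-e^{-1}\bigr),
\]
which is impossible for $k$ large. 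Hence there are $\epsilon>0$ and $s_0$ with $\ell_\xi(s)\ge(1+\epsilon)s$ for all unit $\xi$ and all $s>s_0$, i.e.\ $\Phi(s)\le e^{-(1+\epsilon)s}$ for such $s$, which is (3.1).

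The step I expect to be the real issue is the \emph{uniformity} in $\sigma$: one needs a single pair $(\epsilon,s_0)$ valid for all of $E$ at once, not one depending on $\sigma$. What makes this work is exactly the passage to the dual norms, which turns ``positive curvature'' into the honest convexity of each $\ell_\xi$, combined with the $\xi$-uniform, curvature-free bound $K$; as the last display shows, these two inputs alone force the uniform exponential estimate. When the norms $\|\cdot\|_s$ are simultaneously diagonalisable, this reduces to applying the one-variable case to the finitely many logconcave eigenvalue functions noted at the end of Section~2.
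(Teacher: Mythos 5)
Your proof is correct, and it reaches the crucial uniform estimate by a genuinely different mechanism than the paper. The paper argues by contradiction at the level of the primal norms: if (3.1) fails for some small $\epsilon$ at some large $s$, it diagonalises $\|\cdot\|_s$ against $\|\cdot\|_0$, interpolates by the flat metric $|\sigma|^2_t=\sum|c_j|^2e^{t\lambda_j}$, and invokes the maximum principle for positively curved metrics (Berman--Keller, Lemma 8.11) to get $\|\sigma\|^2_t\geq|\sigma|^2_t$ on $[0,s]$; applied to the offending eigenvector this yields $\int_0^s e^t\|e_0\|^2_t\,dt\geq\|e_0\|_0^2(1-e^{-1})/\epsilon$, and the uniformity over $\sigma$ is then extracted from the equivalence of norms on the finite-dimensional space $E$ (the integral, if everywhere finite, defines a norm comparable to $\|\cdot\|_0$). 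You instead (i) obtain a uniform constant $K$ up front by dominating the top relative eigenvalue $\Phi(s)$ by the trace, and (ii) replace the maximum-principle comparison by passing to the dual bundle, where nonpositive curvature makes $\ell_\xi(s)=\log\|\xi\|_s^{*2}$ honestly convex for constant sections $\xi$; the monotone-slope property of convex functions vanishing at $0$ then plays exactly the role of the paper's flat interpolation, and your final integral estimate is the same $k(1-e^{-1})$ computation. The two inputs are equivalent in strength---subharmonicity of $\log\|\xi\|^{*2}$ for holomorphic sections of the dual is the standard dual formulation of Griffiths positivity, and the quoted maximum principle is usually derived from it---but your route is more self-contained (one-variable convexity instead of a cited lemma) and isolates the uniformity cleanly in the single scalar function $\Phi$. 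Two small points worth making explicit: the trace bound $\Phi(s)\leq\sum_j\|\sigma_j\|^2_s$ requires the $\sigma_j$ to be orthonormal for $\|\cdot\|_0$, which you have; and convexity of $\ell_\xi$ is obtained on the open half-line, so one should note that the norms extend continuously to $s=0$ (true in the application, and implicitly assumed by the paper as well) before using $\ell_\xi(0)=0$ in the monotone-slope step.
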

\begin{proof}
One direction is of course clear; if (3.1) holds the integral converges. So, assume that (3.1) does not hold. Then, for any $\epsilon>0$ we can find  $s>1/\epsilon$ and some $\sigma$ in $E$ such that
\be
 \|\sigma\|^2_s > e^{-(1+\epsilon)s} \|\sigma\|^2_0.
\ee
By the spectral theorem we can choose an orthonormal basis, $e_j$, for $\|\cdot\|_0$ that diagonalises  $\|\cdot\|_s$. If 
$$
\sigma=\sum c_j e_j
$$
we can write
$$
\|\sigma\|^2_0=\sum |c_j|^2 \quad \text{and}\quad \|\sigma\|^2_s=\sum |c_j|^2e^{s\lambda_j}
$$
since the eigenvalues are positive. By (3.2), at least one $\lambda_j$ - say $\lambda_0$ - is larger than $-(1+\epsilon)$. 

\bigskip

We now define another family of norms $|\cdot|_t$ for $0\leq t\leq s$ by
$$
|\sigma|^2_t=\sum |c_j|^2 e^{t\lambda_j}
$$
Since $e_j(\zeta)=e_j e^{-\lambda_j\zeta/2}$ defines a global holomorphic orthonormal frame,  $|\sigma|^2_{\Re \zeta}$ defines a hermitean metric on $\E$ of zero curvature. Moreover the new norms agree with the previous ones for $t=0$ and $t=s$. By the maximum principle for positive metrics (see e g \cite{Berman-Keller}, Lemma 8.11) we have
$$
\|\sigma\|^2_t\geq |\sigma|^2_t
$$
for $0\leq t\leq s$. Choose $\sigma=e_0$. Then we conclude that
$$
\|e_0\|^2_t\geq |e_0|^2_t\geq \|e_0\|^2_0 e^{-(1+\epsilon)t}.
$$
Therefore
$$
\int_0^s e^t\|e_0\|^2_t dt\geq \|e_0\|^2_0\int_0^s e^{-\epsilon t} dt\geq
 \|e_0\|^2_0 (1-e^{-1})/\epsilon,
$$
since $s>1/\epsilon$. Since $\epsilon$ can be taken arbitrarily small we see that there is no constant such that
$$
\int_0^{\infty} e^s \|\sigma\|^2_s ds\leq C\|\sigma\|_0^2
$$
for all $\sigma$ in $E$. 
Since all norms on a finite dimensional vector space are equivalent,  the integral in the right hand side cannot converge, which completes the proof. 

\end{proof}

\bigskip

We can now combine Proposition 2.1 and Theorem 3.1 to prove Theorem 1.1.
The hypothesis implies that
$$
\|\sigma\|^2=c_n\int_X \sigma\wedge\bar\sigma e^{-(\phi-\phi_0)} e^{-k\phi_0}
$$
is finite for any $\sigma$ in $H^0(K_X+kL)$. By Proposition 2.1 this implies that the integrals
$$
\int_0^\infty e^s \|\sigma\|^2_s ds
$$
converge for any $\sigma$ in $H^0(K_X+kL)$. By Theorem 3.1 there are $\epsilon>0$ and $s_0$ such that 
$$
\|\sigma\|^2_s\leq e^{-(1+\epsilon)s}\|\sigma\|^2_0
$$
if $s>s_0$. Hence there is some $p>1$ such that 
$$
\int_0^\infty e^{ps} \|\sigma\|^2_s ds
$$
converges as well. By  (2.4) we then have that
$$
c_n\int_X \sigma\wedge\bar\sigma e^{-p(\phi-\phi_0)} e^{-k\phi_0}
$$
is finite. If $k$ is so large that $K_X+kL$ is base point free this gives that
$$
\int_X e^{-p(\phi-\phi_0)} d\mu <\infty,
$$
so we are done.

\def\listing#1#2#3{{\sc #1}:\ {\it #2}, \ #3.}

\end{document}